\theoremstyle{definition}
\newtheorem{theorem}{Theorem}[section]
\newtheorem{corollary}[theorem]{Corollary}
\newtheorem{proposition}[theorem]{Proposition}
\newtheorem{remark}[theorem]{Remark}
\def\@seccntformat#1{\@ifundefined{#1@cntformat}%
	{\csname the#1\endcsname\quad}%      default
	{\csname #1@cntformat\endcsname}%    enable individual control
}
\newif\ifShowComments
\def\strutdepth{\dp\strutbox}
\def\druk#1{\strut\vadjust{\kern-\strutdepth
        {\vtop to \strutdepth{%
                \baselineskip\strutdepth\vss
                        \llap{\hbox{#1}\quad}\null}}}}
\title{\bf
%Derivation of unbiased estimators for some functions of rate of the exponential distribution
%
%Unbiased estimators for functions of the exponential distribution rate parameter
On unbiased estimators for functions of the rate parameter of the exponential distribution
}
\author{
\text{Roberto Vila}$^{1}$\thanks{Corresponding author: Roberto Vila, email: {rovig161@gmail.com}
%\newline
%%{\it Preprint submitted to Annals of the Institute of Statistical Mathematics on \today}
}
\,\,\,and
\text{Eduardo Yoshio Nakano}$^{1}$
\\
{\small $^{1}$ Department of Statistics, University of Brasilia, Brasilia, Brazil}\\
%{\small $^{2}$ Department of Economics, Federal University of Pelotas, Pelotas, Brazil}\\
}
\begin{document}
	\maketitle 	
	\begin{abstract}
%In this paper we explicitly derive unbiased estimators for several functions of rate parameter of the exponential distribution such as rate parameter power, $q$th quantile, $p$th moment, survival function, maximum, minimum,  probability density function, mean %past lifetime, moment generating function among others. It is also %noteworthy that this manuscript presents a correction to a general %formula proposed by Tate, R. F. (Ann. Math. Statist., 30(2): 341–%366, 1959) concerning unbiased estimators for functions of the %exponential distribution rate parameter in the absence of a %location parameter. In addition, a result establishing the %asymptotic normality of the proposed unbiased estimators is %presented.
%
In this paper, we explicitly derive unbiased estimators for various functions of the rate parameter of the exponential distribution {\color{black} in the absence of a location parameter}, including powers of the rate parameter, the 
$q$th quantile, the $p$th moment, the survival function, the maximum, minimum, probability density function, mean past lifetime, moment generating function, and others. 
{\color{black}
This work non-trivially complements established formulas for unbiased estimators of functions of parameters of the location-rate exponential distribution.
}
%
%It is also noteworthy that this work corrects a general formula originally proposed by Tate, R. F. (Ann. Math. Statist., 30(2): 341-366, 1959) for constructing unbiased estimators of functions of the exponential distribution’s rate parameter in the absence of a location parameter. 
%
Additionally, we establish a result demonstrating the asymptotic normality of the proposed unbiased estimators.
	\end{abstract}
	\smallskip
	\noindent
	{\small {\bfseries Keywords.} {Exponential distribution, Laplace transform, unbiased estimator.}}
	\\
	{\small{\bfseries Mathematics Subject Classification (2010).} {MSC 60E05 $\cdot$ MSC 62Exx $\cdot$ MSC 62Fxx.}}
%	
%{
%  \hypersetup{linkcolor=black}
%  \tableofcontents
%}

\section{Introduction}	

%{\color{black}
	%\begin{itemize}
	%    \item Justificar o porque da necessidade de estimadores não tendenciosos para a distribuição exponencial.
	%    \item Destacar a importância de características populacionais e suas aplicações.
	
	%    \item ROBERTO, AO INVÉS DE INDICAR ESSAS IMPORTANCIAS, EU CITEI A CORREÇÃO DO ERRO DE TATE, MAIS EXEMPLOS E A NORMALIDADE ASSINTOTICA. FICOU IMPLICITO AQUI QUE A ESCOLHA DA EXPONENCIAL FOI PARA CORRIGIR O ERRO DE TATE...
	%\end{itemize}
	%}

An estimator is said to be unbiased when its expected value equals the population parameter it is intended to estimate. The use of unbiased estimators is fundamental in statistical inference, as it ensures, on average, the accuracy of estimates and prevents systematic bias in the results. Among the most commonly employed methods for constructing estimators are the method of moments and maximum likelihood method. The former is particularly advantageous for directly producing estimators of linear transformations of the parameters, while the latter offers greater generality, enabling the derivation of estimators for arbitrary bijective transformations. Moreover, it is well known that when the underlying distribution is a member of the exponential family, the moment estimators and the maximum likelihood estimators (MLEs) coincide \citep{Davidson1974}.

Two well-established properties of MLEs are their invariance under bijective transformations and their asymptotic unbiasedness. These properties facilitate the construction of asymptotically unbiased estimators for any bijective transformation of the parameter of interest. However, it is well-known that the maximum likelihood (ML) method can produce biased estimators. Therefore, even if the MLE is unbiased for a parameter $\lambda$, the MLE of $\xi(\lambda)$ may be biased for a transformation $\xi(\lambda)$, unless $\xi(\cdot)$ is linear. Therefore, MLEs may not be ideal when the objective is to obtain unbiased estimators of specific functions of the parameter, especially when the sample size is not large enough.

To address this limitation, several authors have proposed alternative methods for deriving unbiased estimators. \cite{Washio1956} introduced a technique based on the Laplace transform to obtain unbiased estimators of transformations within the exponential family. \cite{Tate1959} extended this approach using the Laplace, bilateral Laplace, and Mellin transforms to derive unbiased estimators for distributions involving location and scale parameters. Following this same idea, \cite{Seheult1971} presented formulations of unbiased estimators for some probability density functions.

{\color{black}
It is important to note that the estimators proposed in this work are derived from exponential populations without a location parameter. As such, they complement the general formula introduced by \cite{Tate1959} for unbiased estimators of functions of parameters in the location–rate exponential distribution. Within this framework, the aim of this study is to provide unbiased estimators for various transformations of the rate parameter that were not addressed in previous research.
}
%
%It is important to note a minor error in the general formula proposed by \cite{Tate1959} concerning unbiased estimators for functions of the rate parameter of the exponential distribution in the absence of a location parameter. In this context, the present work aims to correct the formula originally proposed by \cite{Tate1959} and to provide unbiased estimators for various transformations of the rate parameter of exponential distribution that were not addressed in the aforementioned studies. 
%
Several examples of unbiased estimators for functions of interest in the exponential distribution are provided, along with a result that proof the asymptotic normality of the proposed unbiased estimators.

\section{Preliminaries and the main result}

Let $X_1,\ldots, X_n$ be a random sample of size $n$ from $X\sim\exp(\lambda)$ (exponential distribution with rate parameter $\lambda>0$) and let $\xi(\lambda)$ be a populational characteristic of $X$, for some borel measurable function $\xi:(0,\infty)\to\mathbb{R}$. 
Some examples of functions $\xi(\lambda)$ are given in Table \ref{table:1}. This work focuses on deriving an unbiased estimator for $\xi(\lambda)$, specifically a random variable $h(X_1,\ldots,X_n)$ where
\begin{align*}
	\mathbb{E}\left[h(X_1,\ldots,X_n)\right]
	=
	\xi(\lambda),
\end{align*}
for some measurable function $h:(0,\infty)^n\to\mathbb{R}$.

%\subsection{Exponential case} 

As $\mathbb{E}(X)=1/\lambda$, it is natural to consider
\begin{align*}
	h(X_1,\ldots,X_n)=\phi(\overline{X}),
\end{align*}
where $\overline{X}=(1/n)\sum_{i=1}^{n}X_i$ is the sample mean, for some borel measurable function $\phi:(0,\infty)\to\mathbb{R}$. Therefore, the problem is reduced to obtaining the function $\phi$ in the following equation:
\begin{align}\label{eq-1}
	\mathbb{E}\left[\phi(\overline{X})\right]
	=
	\xi(\lambda).
\end{align}

\smallskip 

%\subsection{The main result}

The following theorem presents the main result of this work, providing an explicit formula for unbiased estimators $\phi(\overline{X})$ %as a function of the populational parameter $\xi(\lambda)$.
for the populational parameter $\xi(\lambda)$.

\begin{theorem}\label{main theorem}
	If $X_1,\ldots,X_n$ is a random sample of size $n$ from $X\sim\exp(\lambda)$, then, the following sample function
	\begin{align}\label{final-exp}
		\phi(\overline{X})
		=
		\int_0^{\infty} 
		\mathds{1}_{\{\overline{X}\geqslant v\}}
		\left(1-{v\over \overline{X}}\right)^{n-1} 
		\mathscr{L}^{-1}\left\{\xi\left({s\over n}\right)
		\right\}(v){\rm d}v
	\end{align}
	is an unbiased estimator for $\xi(\lambda)$, where $F(s)=\mathscr{L}\left\{f(x)\right\}(s)=\int_0^\infty f(x)\exp(-sx){\rm d}x$ denotes the Laplace transform and $f(x)=\mathscr{L}^{-1}\left\{F(s)\right\}(x)$ is its respective inverse.  In the above we are assuming that the inverse of the Laplace transform and the respective improper integral exist.
\end{theorem}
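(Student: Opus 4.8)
The plan is to compute $\mathbb{E}[\phi(\overline{X})]$ directly and show that it collapses to a Laplace transform evaluated at exactly the right point. First I would recall that $n\overline{X}=\sum_{i=1}^{n}X_i$ is a sum of $n$ i.i.d.\ $\exp(\lambda)$ variables and is therefore Gamma-distributed, so that $\overline{X}$ has density
\[
f_{\overline{X}}(t)=\frac{(n\lambda)^n}{(n-1)!}\,t^{n-1}e^{-n\lambda t},\qquad t>0.
\]
Writing $g:=\mathscr{L}^{-1}\{\xi(s/n)\}$, so that by definition $\int_0^\infty g(v)e^{-sv}\,{\rm d}v=\xi(s/n)$, I would substitute the integral defining $\phi$ into $\mathbb{E}[\phi(\overline{X})]=\int_0^\infty\phi(t)\,f_{\overline{X}}(t)\,{\rm d}t$, producing a double integral over the region $\{(t,v):0<v\leqslant t\}$ (the indicator $\mathds{1}_{\{t\geqslant v\}}$ fixing the limits).

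The heart of the argument is an interchange of the order of integration (Fubini) followed by an exact evaluation of the resulting inner integral. After swapping, the inner integral over $t$ is
\[
\int_v^\infty\left(1-\frac{v}{t}\right)^{n-1}\frac{(n\lambda)^n}{(n-1)!}\,t^{n-1}e^{-n\lambda t}\,{\rm d}t.
\]
The key simplification is the algebraic identity $\left(1-v/t\right)^{n-1}t^{n-1}=(t-v)^{n-1}$, which cancels the $t$ in the denominator entirely. With the change of variables $u=t-v$ this reduces to $e^{-n\lambda v}$ times a standard Gamma integral $\int_0^\infty u^{n-1}e^{-n\lambda u}\,{\rm d}u=(n-1)!/(n\lambda)^n$, so the inner integral equals precisely $e^{-n\lambda v}$.

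Consequently $\mathbb{E}[\phi(\overline{X})]=\int_0^\infty g(v)\,e^{-n\lambda v}\,{\rm d}v=\mathscr{L}\{g\}(n\lambda)=\xi(n\lambda/n)=\xi(\lambda)$, which is exactly the claimed unbiasedness. The one genuinely nontrivial point—and the place where the theorem's standing hypothesis that the inverse Laplace transform and the improper integral exist does the real work—is the justification of the Fubini interchange. I would verify absolute integrability of $|g(v)|\left(1-v/t\right)^{n-1}t^{n-1}e^{-n\lambda t}$ over $\{0<v\leqslant t\}$, which by the very same computation reduces to the convergence of $\int_0^\infty|g(v)|\,e^{-n\lambda v}\,{\rm d}v$, i.e.\ to absolute convergence of the Laplace integral defining $\xi$ at the point $n\lambda$. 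Everything else is a routine Gamma-integral evaluation, so I expect the analytic bookkeeping around Fubini to be the only real obstacle.
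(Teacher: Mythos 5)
Your proof is correct, but it runs in the opposite direction from the paper's. The paper \emph{derives} the formula: starting from the requirement $\mathbb{E}[\phi(\overline{X})]=\xi(\lambda)$, it rewrites this as $\mathscr{L}\{\phi(x)x^{n-1}\}(n\lambda)=\Gamma(n)\,\xi(s/n)/s^n\big|_{s=n\lambda}$, inverts the Laplace transform, and uses the convolution theorem with $F(s)=1/s^n$ and $H(s)=\xi(s/n)$ to solve for $\phi$. You instead take the formula as given and \emph{verify} unbiasedness by a direct Fubini computation, using the identity $(1-v/t)^{n-1}t^{n-1}=(t-v)^{n-1}$ and the Gamma integral to collapse the inner integral to $e^{-n\lambda v}$, so that $\mathbb{E}[\phi(\overline{X})]=\mathscr{L}\{g\}(n\lambda)=\xi(\lambda)$. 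Your computation is essentially the proof of the Laplace convolution theorem unrolled in this special case, so the two arguments are close cousins; what yours buys is self-containedness (no appeal to the convolution theorem or to injectivity of the Laplace transform) and an explicit identification of the integrability condition needed to justify the interchange, namely absolute convergence of $\int_0^\infty|g(v)|e^{-n\lambda v}\,{\rm d}v$, which sharpens the paper's vaguer existence hypothesis. What the paper's route buys is an explanation of where the formula comes from rather than a post hoc check. One caveat common to both arguments: several of the paper's applications take $\mathscr{L}^{-1}\{\xi(s/n)\}$ to be a Dirac delta or its derivative, and neither your Fubini step nor the paper's convolution theorem is literally justified in that distributional setting; this is not a gap relative to the paper, but worth noting.
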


\begin{proof}
	It is well-known that  $\overline{X}\sim\text{Gamma}(n,n\lambda)$. Then the equality \eqref{eq-1} can be written as
	\begin{align*}
		{(n\lambda)^n\over \Gamma(n)}
		\int_{0}^{\infty}
		\phi(x)
		x^{n-1}
		\exp(-n\lambda x)
		{\rm d}x
		=
		\xi(\lambda),
	\end{align*}
	which can be expressed in function of Laplace transform as follows
	\begin{align*}
		\mathscr{L}\left\{	\phi(x)
		x^{n-1}\right\}(n\lambda)
		=
		\Gamma(n)\,
		{\displaystyle \xi\left({1\over n} \, (n\lambda)\right)\over (n\lambda)^n}
		=
		\Gamma(n)\,
		{\displaystyle \xi\left({s\over n}\right)\over s^n}
		\Bigg\vert_{s=n\lambda}.
	\end{align*}
	Applying the inverse Laplace transform (if it exists) to both sides gives
	\begin{align}\label{main-identit-0}
		\phi(x)
		x^{n-1}
		=
		\Gamma(n)\,
		\mathscr{L}^{-1}\left\{	
		{\displaystyle \xi\left({s\over n}\right)\over s^n}
		\right\}
		(x)
		=
		\Gamma(n)\,
		\mathscr{L}^{-1}\left\{	
		F(s)H(s)
		\right\}
		(x),
	\end{align}	
	%Thus, an unbiased estimator for $\theta(\lambda)$ is given by
	%\begin{align}\label{main-identity}
	%	g(\overline{X})
	%	=
	%	\Gamma(n)\,
	%	{1\over \overline{X}^{n-1}} \, 
	%	\mathscr{L}^{-1}\left\{	
	%	{\displaystyle \theta\left({s\over n}\right)\over s^n}
	%	\right\}
	%	(\overline{X}),
	%\end{align}
	%where $\overline{X}=(1/n)\sum_{i=1}^{n}X_i$ is the sample mean.
	where we are adopting the notation $F(s)=1/s^n$ and $H(s)=\xi\left({s/n}\right)$. 
	
	Since
	\begin{align*}
		f(x)=  \mathscr{L}^{-1}\left\{	F(s)
		\right\}(x)
		=
		{x^{n-1}\over\Gamma(n)}
		\quad
		\text{and}
		\quad 
		h(x)=\mathscr{L}^{-1}\left\{H(s)
		\right\}(x)
		=
		\mathscr{L}^{-1}\left\{\xi\left({s\over n}\right)
		\right\}(x),
	\end{align*}
	by applying the convolution theorem for the Laplace transform, we obtain
	\begin{align}
		\mathscr{L}^{-1}\left\{	
		F(s) H(s)
		\right\}
		(x)
		=
		(f*h)(x)
		&=
		\int_0^x f(x-v)h(v){\rm d}v
		\nonumber
		\\[0,2cm]
		&=
		{1\over\Gamma(n)}
		\int_0^x (x-v)^{n-1} 
		\mathscr{L}^{-1}\left\{\xi\left({s\over n}\right)
		\right\}(v){\rm d}v
		\label{ide-3-3-1}
		\\[0,2cm]
		&=
		{x^{n-1} \over\Gamma(n)}
		\int_0^\infty 
		\mathds{1}_{\{x\geqslant v\}}
		\left(1-{v\over x}\right)^{n-1} 
		\mathscr{L}^{-1}\left\{\xi\left({s\over n}\right)
		\right\}(v){\rm d}v, \label{ide-3-3}
	\end{align}
	where $x\mapsto(f*h)(x)$ denotes  the convolution function. By plugging \eqref{ide-3-3} in \eqref{main-identit-0}, we get
	\begin{align}
		\phi(x)
		=
		\int_0^\infty 
		\mathds{1}_{\{x\geqslant v\}}
		\left(1-{v\over x}\right)^{n-1} 
		\mathscr{L}^{-1}\left\{\xi\left({s\over n}\right)
		\right\}(v){\rm d}v.
	\end{align}
	Thus, the unbiased estimator $\phi(\overline{X})$ takes the form given in \eqref{final-exp}, which completes the proof of the theorem.
\end{proof}

\begin{corollary}\label{coro-main}
	Under the conditions of Theorem \ref{main theorem}, we have 
	\begin{align*}
		\phi(\overline{X})
		=
		{\Gamma(n)\over \overline{X}^{n-1}}\,
		\mathscr{L}^{-1}\left\{	
		{\displaystyle \xi\left({s\over n}\right)\over s^n}
		\right\}
		(\overline{X}),
	\end{align*}
	where $\mathscr{L}^{-1}\left\{F(s)\right\}(x)$ is the inverse Laplace  transform.
\end{corollary}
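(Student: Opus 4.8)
The plan is to read the result straight off the intermediate identity already established during the proof of Theorem \ref{main theorem}, before the convolution step is invoked. Recall that equation \eqref{main-identit-0} states
\begin{align*}
	\phi(x)\, x^{n-1}
	=
	\Gamma(n)\,
	\mathscr{L}^{-1}\left\{\frac{\xi(s/n)}{s^n}\right\}(x),
\end{align*}
which was derived by rewriting \eqref{eq-1} as a statement about the Laplace transform of $\phi(x)\,x^{n-1}$ evaluated at $n\lambda$ and then inverting. This identity is logically prior to the passage to the explicit integral representation \eqref{final-exp}, so it is available to me without any additional argument; the corollary is essentially just this identity displayed in a compact, pre-convolution form.

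First I would observe that $\overline{X}>0$ almost surely, since $\overline{X}\sim\text{Gamma}(n,n\lambda)$ is supported on $(0,\infty)$. Hence $x^{n-1}>0$ on the relevant range and I may divide both sides of \eqref{main-identit-0} by $x^{n-1}$ with no loss, obtaining
\begin{align*}
	\phi(x)
	=
	\frac{\Gamma(n)}{x^{n-1}}\,
	\mathscr{L}^{-1}\left\{\frac{\xi(s/n)}{s^n}\right\}(x).
\end{align*}
Substituting $x=\overline{X}$ then yields exactly the asserted expression for $\phi(\overline{X})$, and the existence hypotheses on the inverse Laplace transform carry over verbatim from the theorem, so no new assumptions are needed.

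I do not anticipate any genuine obstacle: the only point requiring even minimal care is the legitimacy of dividing by $x^{n-1}$, which is immediate from the positivity of the Gamma support. If a consistency check were desired, one could instead start from \eqref{final-exp}, recognize the integral there as $\Gamma(n)/x^{n-1}$ times the convolution $(f*h)(x)$ appearing in \eqref{ide-3-3}, and thereby recover the same formula; but deriving the statement directly from \eqref{main-identit-0} is the shortest and cleanest route.
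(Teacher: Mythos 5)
Your proposal is correct and follows essentially the same route as the paper, which also derives the corollary directly from \eqref{main-identit-0} by substituting $x=\overline{X}$. The only addition is your explicit remark on the positivity of $\overline{X}$ justifying the division by $x^{n-1}$, which the paper leaves implicit.
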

\begin{proof}
	The proof follows directly from \eqref{main-identit-0} by replacing $x$ with $\overline{X}$.
\end{proof}

\begin{remark}
	Since $\overline{X}\sim{\rm Gamma}(n,n\lambda)$, by using formula in Corollary \ref{coro-main}, we have
	\begin{align*}
		\mathbb{E}[\phi(\overline{X})]
		%    =
		%        \mathbb{E}\left[
		%        {\Gamma(n)\over \overline{X}^{n-1}}\,
		%        \mathscr{L}^{-1}\left\{	
		%{\displaystyle \xi\left({s\over n}\right)\over s^n}
		%\right\}
		%(\overline{X})
		%\right]
		&=
		{(n\lambda)^n}
		\int_0^\infty 
		\mathscr{L}^{-1}\left\{	
		{\displaystyle \xi\left({s\over n}\right)\over s^n}
		\right\}
		(x) \
		\exp(-n\lambda x)
		{\rm d}x
		\\[0,2cm]
		&=
		{(n\lambda)^n} \,
		\mathscr{L}\left\{
		\mathscr{L}^{-1}\left\{	
		{\displaystyle \xi\left({s\over n}\right)\over s^n}
		\right\}
		(x)
		\right\}(n\lambda)
		\\[0,2cm]
		&=
		\xi(\lambda).
	\end{align*}
	This reinforces the unbiasedness of the estimator $\phi(\overline{X})$ of Corollary \ref{coro-main} and Theorem \ref{main theorem}.
\end{remark}

%{\color{black}
	\begin{remark}
		It is important to note that {\color{black} when $X_1,\ldots,X_n$ is a random sample from $X$ having a location-rate exponential distribution, 
        denoted by $X\sim\text{exp}(\theta,\lambda)$, 
        that is, its probability density function is given by 
$
f_X(x;\theta,\lambda)=\lambda \exp\{-\lambda(x-\theta)\}, x>\theta,
$
\cite{Tate1959} (Item 7.4) proposed the following unbiased  estimator for the function $\xi(\theta,\lambda)$: 
\begin{align*}
	\phi(X_{(1)},\overline{X}-X_{(1)})
=
{\Gamma(n-1)\over [\overline{X}-X_{(1)}]^{n-2}} \, 
\mathscr{L}^{-1}\left\{
{\xi(\theta,s/n)\over s^{n-1}} \, 
-	{{\partial\xi\over\partial\theta}(\theta,s/n)\over s^{n}} 
\right\}(\overline{X}-X_{(1)}) \, 
\Bigg\vert_{\theta=X_{(1)}},
\end{align*}
 %       		\begin{align*}
%			\phi^*(\overline{X})
%			\equiv 
%			{\Gamma(n-1)\over \overline{X}^{n-2}}\,
%			\mathscr{L}^{-1}\left\{	
%			{\displaystyle \xi\left({s\over n}\right)\over s^{n-1}}
%			\right\}
%			(\overline{X}),
%		\end{align*}
where $X_{(1)}\equiv\min\{X_1,\ldots,X_n\}$.
Although $X\sim\text{exp}(\theta=0,\lambda)=\text{exp}(\lambda)$, the above estimator $\phi(X_{(1)},\overline{X}-X_{(1)})$ cannot be reduced to the form of the estimator $\phi(\overline{X})$ given in Corollary \ref{coro-main}. Thus, the results presented in this work provide a non-trivial complement to the unbiased estimator formulas established by \cite{Tate1959}.
}
	\end{remark}
%}

\begin{corollary}
	Under the conditions of Theorem \ref{main theorem}, we have 
	\begin{align*}
		\phi(\overline{X})
		=
		{\Gamma(n)\over \overline{X}^{n-1}}\,
		\mathcal{R}_n\left\{\mathscr{L}^{-1}\left\{\xi\left({s\over n}\right)
		\right\}(v)\right\}(\overline{X}),
	\end{align*}
	where $\mathcal{R}_\mu\left\{f(v)\right\}(x)=[1/\Gamma(\mu)]\int_0^x (x-v)^{\mu-1}f(v){\rm d}v$ is the Riemann-Liouville fractional integral and $\mathscr{L}^{-1}\left\{F(s)\right\}(x)$ is the inverse Laplace  transform.
\end{corollary}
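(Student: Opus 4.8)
The plan is to recognize that the Riemann--Liouville fractional integral of order $n$ is nothing but the convolution already evaluated inside the proof of Theorem \ref{main theorem}. First I would specialize the given operator to $\mu = n$, so that
\begin{align*}
	\mathcal{R}_n\left\{f(v)\right\}(x)
	=
	{1\over\Gamma(n)}\int_0^x (x-v)^{n-1} f(v)\,{\rm d}v,
\end{align*}
and then take $f(v) = \mathscr{L}^{-1}\left\{\xi(s/n)\right\}(v)$. With this choice the right-hand side coincides termwise with the expression appearing in \eqref{ide-3-3-1}, which is the convolution $(f*h)(x)$ computed there with $F(s)=1/s^n$ and $H(s)=\xi(s/n)$. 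Hence I would record the identification
\begin{align*}
	\mathscr{L}^{-1}\left\{F(s)H(s)\right\}(x)
	=
	\mathcal{R}_n\left\{\mathscr{L}^{-1}\left\{\xi\left({s\over n}\right)\right\}(v)\right\}(x).
\end{align*}

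Next I would substitute this into the main identity \eqref{main-identit-0}, which reads $\phi(x)\,x^{n-1} = \Gamma(n)\,\mathscr{L}^{-1}\left\{F(s)H(s)\right\}(x)$, and isolate $\phi(x)$ by dividing through by $x^{n-1}$, obtaining $\phi(x) = [\Gamma(n)/x^{n-1}]\,\mathcal{R}_n\{\cdots\}(x)$. Equivalently, one may start directly from Corollary \ref{coro-main} and note that, since $1/s^n = \mathscr{L}\{x^{n-1}/\Gamma(n)\}(s)$, the convolution theorem for the Laplace transform identifies $\mathscr{L}^{-1}\{\xi(s/n)/s^n\}$ with the fractional integral of $\mathscr{L}^{-1}\{\xi(s/n)\}$; both routes lead to the same formula. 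Finally, replacing the deterministic variable $x$ by the sample mean $\overline{X}$ yields precisely the claimed expression for $\phi(\overline{X})$.

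Since the substantive work — the convolution computation in \eqref{ide-3-3-1}--\eqref{ide-3-3} and the standing assumptions that the inverse Laplace transform and the improper integral exist — was already discharged in the proof of Theorem \ref{main theorem}, I do not expect any genuine obstacle; the corollary is essentially a change of notation. The one point deserving a moment's care is matching the domains of integration: the indicator $\mathds{1}_{\{x\geqslant v\}}$ in \eqref{ide-3-3} confines the integral to $[0,x]$, which is exactly the range in the definition of $\mathcal{R}_n$, so the passage between the two representations is immediate.
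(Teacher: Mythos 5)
Your proposal is correct and follows the paper's own argument exactly: the paper likewise obtains the corollary by combining the convolution identity \eqref{ide-3-3-1} with \eqref{main-identit-0} and recognizing the integral as $\mathcal{R}_n$. The alternative route you sketch via Corollary \ref{coro-main} is an equivalent restatement of the same computation, so nothing substantively different is involved.
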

\begin{proof}
	The proof follows imediately by combining \eqref{ide-3-3-1} with \eqref{main-identit-0}.
\end{proof}

\section{Examples of unbiased estimators}

In what follows we apply Formula \eqref{final-exp} in Theorem \ref{main theorem} to obtain some unbiased estimators for specific population parameters $\xi(\lambda)$ of $X\sim\exp(\lambda)$.

\subsection{Rate parameter power}\label{Rate parameter power}

For  $X\sim\exp(\lambda)$, consider the following populational characteristic
\begin{align*}
	\xi(\lambda)
	=
	\lambda^p,
\end{align*}
where $p$ is given. 

By the linearity of the inverse Laplace transform, we have
\begin{align*}
	\mathscr{L}^{-1}\left\{	
	{\displaystyle \xi\left({s\over n}\right)}
	\right\}
	(x)
	=
	{1\over n^p } \,
	\mathscr{L}^{-1}&\left\{	
	{\displaystyle 	
		s^{p}
	}
	\right\}
	(x)
	=
	{1\over n^p \Gamma(-p)} \,
	{x^{-p-1}},
\end{align*}
because $\mathscr{L}^{-1}\left\{	
s^{p}
\right\}(x)=x^{-p-1}/\Gamma(-p)$.
Hence, from Theorem \ref{main theorem}, an unbiased estimator for the power of rate parameter of  $X$ is given by
\begin{align}
	\phi(\overline{X})
	&=
	\int_0^{\infty} 
	\mathds{1}_{\{\overline{X}\geqslant v\}}
	\left(1-{v\over \overline{X}}\right)^{n-1} 
	\mathscr{L}^{-1}\left\{\xi\left({s\over n}\right)
	\right\}(v){\rm d}v
	\nonumber
	\\[0,2cm]
	&=
	{1\over n^p \Gamma(-p)} 
	\int_0^{\infty} 
	\mathds{1}_{\{\overline{X}\geqslant v\}}
	\left(1-{v\over \overline{X}}\right)^{n-1} 
	{v^{-p-1}}
	{\rm d}v
	\nonumber
	\\[0,2cm]
	&=
	{{\rm B}(-p,n)\over n^p \Gamma(-p)}  \, \overline{X}^{-p}
	\nonumber
	\\[0,2cm]
	&=
	{\Gamma(n)\over n^p\Gamma(n-p)}  \, \overline{X}^{-p},
    \quad p<n,
	\label{formula-power}
\end{align}
because $\int_0^x (1-v/x)^{n-1} v^{-p-1}{\rm d}v={\rm B}(-p,n)x^{-p}$. In the above, ${\rm B}(x,y)=\Gamma(x)\Gamma(y)/\Gamma(x+y)$ is the beta function.

\subsection{Quantile}\label{q-quantile}

For  $X\sim\exp(\lambda)$,  the   $q$th quantile is given by
\begin{align*}
	\xi(\lambda)={-{\dfrac {\ln(1-q)}{\lambda }}} \, \mathds{1}_{\{0<q<1\}}.
\end{align*}

By using the unbiased estimator for the rate parameter power (see Subsection \ref{Rate parameter power})  with $p=-1$, it is clear that an  unbiased estimator for the $q$th quantile of $X$ is given by.
\begin{align*}
	\phi(\overline{X})= -\ln(1-q) \overline{X} \, \mathds{1}_{\{0<q<1\}}.
\end{align*}

\subsection{Moment}

For  $X\sim\exp(\lambda)$,  the $p$th  moment, with $p>-1$ known, is given by
\begin{align*}
	\xi(\lambda)=\mathbb{E}[X^p]={\Gamma(p+1)\over \lambda^p}.
\end{align*}

Taking $-p$ instead of $p$ in Formula \eqref{formula-power}, we have that
\begin{align*}
	\phi(\overline{X})
	=
	{\Gamma(p+1)\Gamma(n) n^p\over \Gamma(p+n)} \, \overline{X}^{p}
\end{align*}
is an unbiased estimator for the $p$th real moment of $X$.

\subsection{Survival function}
\label{Unbiased estimator for the survival function}

For $X\sim\exp(\lambda)$, the survival function is given by
\begin{align*}
	\xi(\lambda)=\mathbb{P}(X>t)=\exp(-\lambda t),
\end{align*}
where $t>0$ is known. 

Hence,
\begin{align*}
	\mathscr{L}^{-1}\left\{	
	{\displaystyle \xi\left({s\over n}\right)}
	\right\}
	(x)
	=
	\mathscr{L}^{-1}\left\{	
	{\displaystyle
		\exp\left(-{t\over n} \, s\right)}
	\right\}(x) 
	=
	\delta\left(x-{t\over n}\right),
\end{align*}
because $\mathscr{L}^{-1}\left\{\exp(-as)\right\}(x)=\delta(x-a)$, where $\delta(x)$ is the Dirac delta function. Therefore, from Theorem \ref{main theorem}, an unbiased estimator for the survival function of  $X$ is given by
\begin{align*}
	\phi(\overline{X})
	&=
	\int_0^{\infty} 
	\mathds{1}_{\{\overline{X}\geqslant v\}}
	\left(1-{v\over \overline{X}}\right)^{n-1} 
	\mathscr{L}^{-1}\left\{\xi\left({s\over n}\right)
	\right\}(v){\rm d}v
	\\[0,2cm]
	&=
	\int_0^{\infty} 
	\mathds{1}_{\{\overline{X}\geqslant v\}}
	\left(1-{v\over \overline{X}}\right)^{n-1} 
	\delta\left(v-{t\over n}\right)
	{\rm d}v
	\\[0,2cm]
	&=
	{\left(1 - {t\over n \overline{X}}\right)^{n-1}} \, \mathds{1}_{\{\overline{X}\geqslant {t\over n}\}},
\end{align*}
where in the last line we have used the sifting property (or the sampling property)  of Dirac delta function:  $\int _{-\infty }^{\infty }f(t)\,\delta (t-T){\rm d}t=f(T)$.

\subsection{Maximum}\label{Unbiased estimator for the maximum}

Let $X_1,\ldots,X_m$ be $m$ independent copies of $X\sim\exp(\lambda)$. The cumulative distribution function (CDF) of $\max\{X_1,\ldots,X_m\}$ is
\begin{align*}
	\xi(\lambda)=[\mathbb{P}(X\leqslant t)]^m=[1-\exp(-\lambda t)]^m,
\end{align*}
where $t>0$ is known and $m\in\mathbb{N}$. 

By Newton binomial theorem the parameter $\xi(\lambda)$ can be written as
\begin{align*}
	\xi(\lambda)=1+\sum_{k=1}^m\binom{m}{k}(-1)^k\exp(-\lambda k t).
\end{align*}
Then, by using the unbiased estimator for the survival function of $X$, given in Subsection \ref{Unbiased estimator for the survival function}, it is clear that an unbiased estimator for $[\mathbb{P}(X\leqslant t)]^m$ is given by
\begin{align*}
	\phi(\overline{X})
	=
	1
	+
	\sum_{k=1}^m\binom{m}{k}(-1)^k 	{\left(1 - {kt\over n \overline{X}}\right)^{n-1}} \, \mathds{1}_{\{\overline{X}\geqslant {kt\over n}\}}. 
\end{align*}

\subsection{Minimum}

Let $X_1,\ldots,X_m$ be $m$ independent copies of $X\sim\exp(\lambda)$. The survival function of $\min\{X_1,\ldots,X_m\}$ is
\begin{align*}
	\xi(\lambda)=[\mathbb{P}(X> t)]^m=\exp(-\lambda m t),
\end{align*}
where $t>0$ is known and $m\in\mathbb{N}$. 

By using the unbiased estimator for the survival function of $X$, given in Subsection \ref{Unbiased estimator for the survival function}, it is clear that an unbiased estimator for $[\mathbb{P}(X> t)]^m$ is given by
\begin{align*}
	\phi(\overline{X})
	=
	{\left(1 - {mt\over n \overline{X}}\right)^{n-1}} \, \mathds{1}_{\{\overline{X}\geqslant {mt\over n}\}}. 
\end{align*}

\subsection{Probability density function}

For $X\sim\exp(\lambda)$, the probability density function (PDF) is defined by
\begin{align*}
	\xi(\lambda)=\lambda\exp(-\lambda t),
\end{align*}
where $t>0$ is known. 

Using the linearity property of the inverse Laplace transform, we obtain
\begin{align*}
	\mathscr{L}^{-1}\left\{	
	{\displaystyle \xi\left({s\over n}\right)}
	\right\}
	(x)
	=
	{1\over n}\,
	\mathscr{L}^{-1}\left\{	
	{\displaystyle
		s
		\exp\left(-{t\over n} \, s\right)}
	\right\}(x) 
	=
	{1\over n}\,
	\delta'\left(x-{t\over n}\right),
\end{align*}
because $\mathscr{L}^{-1}\left\{s\exp(-as)\right\}(x)=\delta'\left(x-a\right)$, where $\delta'(x)$ is the distributional derivative of the Dirac delta function.
Thus, from Theorem \ref{main theorem}, an unbiased estimator for the PDF of  $X$ is given by
\begin{align*}
	\phi(\overline{X})
	&=
	\int_0^{\infty} 
	\mathds{1}_{\{\overline{X}\geqslant v\}}
	\left(1-{v\over \overline{X}}\right)^{n-1} 
	\mathscr{L}^{-1}\left\{\xi\left({s\over n}\right)
	\right\}(v){\rm d}v
	\\[0,2cm]
	&=
	{1\over n}
	\int_0^{\infty} 
	\mathds{1}_{\{\overline{X}\geqslant v\}}
	\left(1-{v\over \overline{X}}\right)^{n-1} 
	\delta'\left(v-{t\over n}\right) 
	{\rm d}v
	\\[0,2cm]
	&=
	\left({n-1\over n}\right) {1\over\overline{X}}
	{\left(1 - {t\over n \overline{X}}\right)^{n-2}} \, \mathds{1}_{\{\overline{X}\geqslant {t\over n}\}},
\end{align*}
where in the last equality we have used the sifting property of $\delta'(x)$.

\subsection{Mean past lifetime}

For  $X\sim\exp(\lambda)$, the mean past lifetime can be expressed as
\begin{align*}
	\xi(\lambda)=\mathbb{E}[t-X\vert X\leqslant t]
	=
	{t\over 1-\exp(-\lambda t)}-{1\over\lambda},
\end{align*}
where $t>0$ is given. 

By the linearity of the inverse Laplace transform, we have
\begin{align}\label{def-prop-1}
	\mathscr{L}^{-1}\left\{	
	{\displaystyle \xi\left({s\over n}\right)}
	\right\}
	(x)
	&=
	t
	\mathscr{L}^{-1}\left\{	
	{1\over 1-\exp(-{t\over n} \, s)}
	\right\}
	(x)
	- 
	n
	\mathscr{L}^{-1}\left\{{1\over s}
	\right\}
	(x)
	\nonumber
	\\[0,2cm]
	&=
	t
	\sum_{k=0}^\infty 
	\delta\left(x-{tk\over n}\right)
	- 
	n,
\end{align}
because $\mathscr{L}^{-1}\{1/s\}(x)=1$ and 
$\mathscr{L}^{-1}\{{1/[1-\exp(-as)]}\}(x)=\sum_{k=0}^\infty \delta\left(x-{a k}\right)$,  where $\delta(x)$ is the Dirac delta function.
Thus, from Theorem \ref{main theorem}, an unbiased estimator for the mean past lifetime of  $X$ is given by
\begin{align*}
	\phi(\overline{X})
	&=
	\int_0^{\infty} 
	\mathds{1}_{\{\overline{X}\geqslant v\}}
	\left(1-{v\over \overline{X}}\right)^{n-1} 
	\mathscr{L}^{-1}\left\{\xi\left({s\over n}\right)
	\right\}(v){\rm d}v
	\\[0,2cm]
	&=
	t
	\sum_{k=0}^\infty 
	\int_0^{\infty} 
	\mathds{1}_{\{\overline{X}\geqslant v\}}
	\left(1-{v\over \overline{X}}\right)^{n-1} 
	\delta\left(v-{tk\over n}\right)
	{\rm d}v   
	-
	n
	\int_0^{\infty} 
	\mathds{1}_{\{\overline{X}\geqslant v\}}
	\left(1-{v\over \overline{X}}\right)^{n-1} 
	{\rm d}v  
	\\[0,2cm]
	&=
	t
	\sum_{k=0}^\infty 
	\left(1-{tk\over n \overline{X}}\right)^{n-1}
	\mathds{1}_{\{\overline{X}\geqslant {tk\over n}\}}
	-
	\overline{X},
\end{align*}
where the sifting property of the Dirac delta function was applied in the final equality.

\subsection{Moment generating function}
\label{Unbiased estimator for the moment generating function}

For  $X\sim\exp(\lambda)$, the moment generating function (MGF) is written as
\begin{align*}
	\xi(\lambda)=\mathbb{E}[\exp(tX)]
	=
	{\frac{\lambda}{\lambda -t}},
\end{align*}
where $t<\lambda$ is given. 

Note that
\begin{align*}
	\mathscr{L}^{-1}\left\{	
	{\displaystyle \xi\left({s\over n}\right)}
	\right\}
	(x)
	&=
	\mathscr{L}^{-1}\left\{
	{\frac{s}{s -nt}}
	\right\}
	(x)
	=
	nt\exp(ntx)+\delta(x),
\end{align*}
because $\mathscr{L}^{-1}\left\{s/(s-a)\right\}(x)=a\exp(ax)+\delta(x)$,  where $\delta(x)$ is the Dirac delta function. Therefore, from Theorem \ref{main theorem}, an unbiased estimator for the MGF of $X$ can be written as
\begin{align*}
	\phi(\overline{X})
	&=
	\int_0^{\infty} 
	\mathds{1}_{\{\overline{X}\geqslant v\}}
	\left(1-{v\over \overline{X}}\right)^{n-1} 
	\mathscr{L}^{-1}\left\{\xi\left({s\over n}\right)
	\right\}(v){\rm d}v
	\\[0,2cm]
	&=
	nt
	\int_0^{\infty} 
	\mathds{1}_{\{\overline{X}\geqslant v\}}
	\left(1-{v\over \overline{X}}\right)^{n-1} 
	\exp(ntv)
	{\rm d}v
	+
	\int_0^{\infty} 
	\mathds{1}_{\{\overline{X}\geqslant v\}}
	\left(1-{v\over \overline{X}}\right)^{n-1} 
	\delta(v)
	{\rm d}v
	\\[0,2cm]
	&=
	{\exp(nt\overline{X}) \gamma(n,nt\overline{X})\over (nt\overline{X})^{n-1}}
	+
	1,
\end{align*}
because $    \int_0^{x} 
\left(1-{v/x}\right)^{n-1} 
\exp(ntv)
{\rm d}v
=
x\exp(ntx) \gamma(n,ntx)/(ntx)^n
$, where $\gamma (s,x)$ is the lower incomplete gamma function: ${\int _{0}^{x}t^{s-1}\exp({-t}){\rm d}t}$.
Note that in the last equality above the sifting property of the Dirac delta function was also used.

\smallskip

All unbiased estimators of the population parameters $\xi(\lambda)$ of $X\sim\exp(\lambda)$ given in Subsections  \ref{Rate parameter power}-\ref{Unbiased estimator for the moment generating function}
are summarized in the Table \ref{table:1}.
{\small
\begin{table}[H]
	\centering
	\caption{Unbiased estimators for some population parameters $\xi(\lambda)$ of $X\sim\exp(\lambda)$.}
	\label{table:1}
%	\hspace*{-1cm}
\resizebox{\linewidth}{!}{
	\begin{tabular}[t]{lcc}
		\toprule
		Population parameter & $\xi(\lambda)$ & Unbiased 
		estimator $\phi(\overline{X})$  \\
		\midrule
		\rowcolor{gray!10} 
		Rate parameter power
		& $\lambda^p$  &  $\dfrac{\Gamma(n)}{n^p\Gamma(n-p)}  \, \overline{X}^{-p} \, \mathds{1}_{\{p<n\}}$
		\\[3.0ex] %\addlinespace
		$q$th quantile
		&  ${-{\dfrac {\ln(1-q)}{\lambda }}} \, \mathds{1}_{\{0<q<1\}}
		$  &  $-\log(1-q) \overline{X} \, \mathds{1}_{\{0<q<1\}}
		$
		\\[3.0ex] \rowcolor{gray!10} 
		$p$th  moment 
		&  $\dfrac{\Gamma(p+1)}{\lambda^p}\, \mathds{1}_{\{p>-1\}}$  & $	
		\dfrac{\Gamma(p+1)\Gamma(n) n^p}{\Gamma(p+n)} \, \overline{X}^{p}$   
		\\[3.0ex]
		Survival function
		&  $\exp(-\lambda t)$  & $	{\left(1 - \dfrac{t}{n \overline{X}}\right)^{n-1}} \, \mathds{1}_{\{\overline{X}\geqslant {t\over n}\}}$ 
		%%%%%%%%%%%%%%%%%%%%%%%%%
		\\[3.0ex] \rowcolor{gray!10} 
		Maximum
		&  $ 
		[\mathbb{P}(X\leqslant t)]^m=[1-\exp(-\lambda t)]^m$  & 
		$  \displaystyle 
		1
		+
		\sum_{k=1}^m\binom{m}{k}(-1)^k 	{\left(1 - {kt\over n \overline{X}}\right)^{n-1}} 
		\mathds{1}_{\{\overline{X}\geqslant {kt\over n}\}}$ 
		%%%%%%%%%%%%%%%%%%%%%%%%%%%%%%
		\\[3.0ex]
		Minimum
		&  $[\mathbb{P}(X> t)]^m=\exp(-\lambda tm)$  & $ \displaystyle 
		{\left(1 - {mt\over n \overline{X}}\right)^{n-1}} \, \mathds{1}_{\{\overline{X}\geqslant {mt\over n}\}}$ 
		%%%%%%%%%%%%%%%%%%%%%%%%%       
		\\[3.0ex]   \rowcolor{gray!10} 
		Probability density function & $\lambda\exp(-\lambda t)$ &
		$ \displaystyle 
		\left({n-1\over n}\right) {1\over\overline{X}}
		{\left(1 - {t\over n \overline{X}}\right)^{n-2}} \, \mathds{1}_{\{\overline{X}\geqslant {t\over n}\}}$
		\\[3.0ex]   
		Mean past lifetime
		&  $\dfrac{t}{1-\exp(-\lambda t)}-\dfrac{1}{\lambda}$  & $    
		t \displaystyle 
		\sum_{k=0}^\infty 
		\left(1-\dfrac{tk}{n \overline{X}}\right)^{n-1}
		\mathds{1}_{\{\overline{X}\geqslant {tk\over n}\}}
		-
		\overline{X}
		$     
		\\[3.0ex] \rowcolor{gray!10} 
		Moment generating function
		&
		$\displaystyle 
		{\frac{\lambda}{\lambda -t}}\, \mathds{1}_{\{t<\lambda\}}$
		&     $\displaystyle 
		{\exp(nt\overline{X}) \gamma(n,nt\overline{X})\over (nt\overline{X})^{n-1}}
		+
		1 $ 
		\\[3.0ex]		%			\addlinespace 
		Expected shortfall	
		& $\dfrac{-\ln(1-p)+1}{\lambda} \, \mathds{1}_{\{0<p<1\}}
		$ & $[{-\ln(1-p)+1}]\overline{X} \, \mathds{1}_{\{0<p<1\}}$  
		\\[3.0ex]
		%		\rowcolor{gray!10} 
		%		Generalized gamma 
		%		& ${\alpha\over \delta}$ & ${\delta\over \alpha \beta^\delta}$ & $x^\delta$ 
		%		\\[0.5ex]
		%		Generalized inverse gamma 
		%		&  ${\alpha\over \delta}$ &  ${\delta\over \alpha\beta^{\delta}}$ & ${1\over x^\delta}$
		%		\\[0.5ex]
		%		\rowcolor{gray!10} 
		\bottomrule
	\end{tabular}
}
%	\hspace*{-1cm}
\end{table}
}

\begin{remark}
	It is clear that from the $p$th moment estimator in Table 	\ref{table:1} we can obtain estimators for the mean and variance.
	Additionally, estimators for the hazard function, cumulative hazard function, residual mean lifetime and Fisher information follow directly from the rate parameter power estimator.
\end{remark}

The next result shows that our unbiased estimator $\widehat{\phi}$ for the $p$th moment (see Table \ref{table:1}) outperforms the correspondent MLE estimator in terms of efficiency.
\begin{proposition}\label{prop-compare-var}
	Let $\widehat{\psi}$ be the unbiased maximum likelihood estimator (MLE) for the $p$th moment of $X\sim\exp(\lambda)$ (which coincides with the moment estimator \citep{Davidson1974}), 
	that is,
	\begin{align*}
		\widehat{\psi}\equiv{1\over n}\sum_{i=1}^n X_i^p,
	\end{align*} 
	and let
	\begin{align*}
		\widehat{\phi}
		\equiv 
		\phi(\overline{X})
		=
		\dfrac{\Gamma(p+1)\Gamma(n) n^p}{\Gamma(p+n)} \, \overline{X}^{p}
	\end{align*}
	be the proposed estimator for  the $p$th moment (see Table \ref{table:1}).
	We have:
	\begin{align*}
		{\rm Var}(\widehat{\phi})<{\rm Var}(\widehat{\psi}),
	\end{align*}
	for all $p>-1/2$ and nonzero.
\end{proposition}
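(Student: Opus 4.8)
The cleanest route is to recognize $\widehat{\phi}$ as the Rao--Blackwellization of $\widehat{\psi}$ and invoke the law of total variance, which avoids any special‑function estimates. Since $\overline{X}$ is sufficient for $\lambda$, I would first establish the identity $\widehat{\phi}=\mathbb{E}[\widehat{\psi}\mid\overline{X}]$ and then write
\begin{align*}
	{\rm Var}(\widehat{\psi})
	=
	{\rm Var}\big(\mathbb{E}[\widehat{\psi}\mid\overline{X}]\big)
	+
	\mathbb{E}\big[{\rm Var}(\widehat{\psi}\mid\overline{X})\big]
	=
	{\rm Var}(\widehat{\phi})
	+
	\mathbb{E}\big[{\rm Var}(\widehat{\psi}\mid\overline{X})\big],
\end{align*}
so that ${\rm Var}(\widehat{\psi})-{\rm Var}(\widehat{\phi})=\mathbb{E}[{\rm Var}(\widehat{\psi}\mid\overline{X})]\geqslant 0$, with strict inequality precisely when $\widehat{\psi}$ is not almost surely a function of $\overline{X}$.

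The key step is the conditional identity. Writing $S=\sum_{i=1}^n X_i=n\overline{X}$, for i.i.d.\ exponentials the normalized vector $(X_1/S,\ldots,X_n/S)$ is Dirichlet$(1,\ldots,1)$ (uniform on the simplex) and independent of $S$; in particular $X_1/S\sim{\rm Beta}(1,n-1)$. Hence, by exchangeability,
\begin{align*}
	\mathbb{E}\big[\widehat{\psi}\mid\overline{X}\big]
	=
	\mathbb{E}\big[X_1^p\mid\overline{X}\big]
	=
	S^p\,\mathbb{E}\big[(X_1/S)^p\big]
	=
	(n\overline{X})^p\,
	{{\rm B}(p+1,n-1)\over {\rm B}(1,n-1)}
	=
	{\Gamma(p+1)\Gamma(n)\,n^p\over \Gamma(p+n)}\,\overline{X}^p,
\end{align*}
which is exactly $\widehat{\phi}$. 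The decomposition then gives ${\rm Var}(\widehat{\phi})\leqslant{\rm Var}(\widehat{\psi})$, and the inequality is strict as soon as ${\rm Var}(\widehat{\psi}\mid\overline{X})>0$. Since, given $S$, the map $(x_1,\ldots,x_n)\mapsto\frac1n\sum x_i^p$ on $\{x_i>0,\ \sum x_i=S\}$ is constant only for $p\in\{0,1\}$ (the two values for which $x\mapsto x^p$ is affine), the conditional variance is positive for $n\geqslant2$ and $p\notin\{0,1\}$, giving the claim. Note that $p=1$ yields $\widehat{\psi}=\widehat{\phi}=\overline{X}$, so there the variances coincide and $p=1$ is a genuine equality case that the statement should exclude.

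As a quantitative cross‑check, I would also compute both variances explicitly: using $\mathbb{E}[X^r]=\Gamma(r+1)/\lambda^r$ and $\overline{X}\sim{\rm Gamma}(n,n\lambda)$ with $\mathbb{E}[\overline{X}^r]=\Gamma(n+r)/[\Gamma(n)(n\lambda)^r]$ (valid for $r=p,2p$ since $p>-1/2$), one obtains
\begin{align*}
	{\rm Var}(\widehat{\psi})
	=
	{\Gamma(2p+1)-\Gamma(p+1)^2\over n\,\lambda^{2p}},
	\qquad
	{\rm Var}(\widehat{\phi})
	=
	{\Gamma(p+1)^2\over\lambda^{2p}}
	\left({\Gamma(n)\Gamma(n+2p)\over\Gamma(n+p)^2}-1\right).
\end{align*}
The factor $\lambda^{-2p}$ cancels, so the proposition is equivalent to the dimensionless inequality $n\,\Gamma(p+1)^2\big(\Gamma(n)\Gamma(n+2p)/\Gamma(n+p)^2-1\big)\leqslant\Gamma(2p+1)-\Gamma(p+1)^2$. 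Setting $g(m)=m\big(\Gamma(m)\Gamma(m+2p)/\Gamma(m+p)^2-1\big)$, the right‑hand side equals $\Gamma(p+1)^2 g(1)$, so the inequality reduces to $g(n)\leqslant g(1)$, i.e.\ to monotonicity of $g$ on $[1,\infty)$. Log‑convexity of $\Gamma$ gives $g\geqslant 0$, and concavity of the digamma gives that $\Gamma(m)\Gamma(m+2p)/\Gamma(m+p)^2$ decreases in $m$; but proving that $g$ itself decreases is the main obstacle on this route. Writing $D(m)=\log\Gamma(m)+\log\Gamma(m+2p)-2\log\Gamma(m+p)$, one needs $e^{D}(mD'+1)<1$, and here the leading asymptotics cancel ($D\sim p^2/m$, $mD'\sim-p^2/m$), so the sign is decided by subleading terms and the estimate is delicate. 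This special‑function difficulty is exactly what the law‑of‑total‑variance argument bypasses, which is why I would make that the primary proof and keep the closed forms only as a check and to pin down the $p=1$ equality.
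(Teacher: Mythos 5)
Your Rao--Blackwell route is correct and genuinely different from the paper's. The paper computes both variances in closed form (exactly your ``cross-check'' formulas) and reduces the claim to the Gamma-function inequality $\tfrac{1}{n}\,\Gamma(2p+1)/\Gamma^2(p+1)-\Gamma(n)\Gamma(2p+n)/\Gamma^2(p+n)+1>\tfrac{1}{n}$, which it then asserts holds ``because $\Gamma(2p+1)/\Gamma^2(p+1)>1$ and $\Gamma(2p+n)/\Gamma^2(p+n)>1$.'' As you anticipated, this reduction is exactly the delicate step, and the paper's justification is a non sequitur: writing $A=\Gamma(2p+1)/\Gamma^2(p+1)$ and $B=\Gamma(n)\Gamma(2p+n)/\Gamma^2(p+n)$, the needed inequality is $\tfrac{1}{n}(A-1)>B-1$, which does not follow from $A>1$ and $B>1$ (and the second fact as printed, without the factor $\Gamma(n)$, is not even true in general). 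Your identity $\widehat{\phi}=\mathbb{E}[\widehat{\psi}\mid\overline{X}]$ (via the Dirichlet/Beta computation, valid for $p>-1$) together with the law of total variance settles $\mathrm{Var}(\widehat{\phi})\leqslant\mathrm{Var}(\widehat{\psi})$ cleanly for $p>-1/2$, and your analysis of when the conditional variance vanishes is what actually pins down strictness; it buys a conceptual explanation (the proposed estimator is the Rao--Blackwellization of the MLE) that the explicit-variance route obscures. In particular you are right that $p=1$ is a genuine equality case: $\widehat{\psi}=\widehat{\phi}=\overline{X}$ there, and indeed both closed-form variances equal $1/(n\lambda^2)$, so the proposition's strict inequality for all nonzero $p>-1/2$ is false at $p=1$; the paper's unjustified final step is precisely what hides this. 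Your argument establishes the correct statement, namely strict inequality for $n\geqslant 2$ and $p\in(-1/2,\infty)\setminus\{0,1\}$, with equality at $p\in\{0,1\}$ (and trivially at $n=1$).
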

\begin{proof} 
	As $X\sim \exp(\lambda)$, we have
	\begin{align*}
		{\rm Var}(\widehat{\psi})
		=
		{1\over n}\, {\rm Var}(X^p)
		=
		{\Gamma^2(p+1)\over\lambda^{2p}}
		\left[
		{\Gamma(2p+1)\over n\Gamma^2(p+1) }
		+
		\left(1-{1\over n}\right)
		-
		1
		\right],
		\quad p>-1/2.
	\end{align*}
	
	On the other hand, as $\overline{X}\sim\text{Gamma}(n,n\lambda)$, we get
	\begin{align*}
		{\rm Var}(\widehat{\phi})
		=
		\dfrac{\Gamma^2(p+1)\Gamma^2(n) n^{2p}}{\Gamma^2(p+n)} \,
		{\rm Var}(\overline{X}^{p})
		=
		\dfrac{\Gamma^2(p+1) }{\lambda^{2p}} \,
		\left[
		{\Gamma(n) \Gamma(2p+n)\over \Gamma^2(p+n) }
		-
		1
		\right],
		\quad p>-n/2.
	\end{align*}
	
	Notice that ${\rm Var}(Z_n)= {\rm Var}(\widehat{\phi})$ when $p=0$.  From now on we assume that $p\neq 0$.  We see that
	${\rm Var}(Z_n)> {\rm Var}(\widehat{\phi})$ if and only if
	\begin{align*}
		{\Gamma(2p+1)\over n\Gamma^2(p+1) }
		+
		\left(1-{1\over n}\right)
		>
		{\Gamma(n) \Gamma(2p+n)\over \Gamma^2(p+n) },
	\end{align*}
	which equivalently can be written as
	\begin{align*}
		{1\over n} \,
		{\Gamma(2p+1)\over \Gamma^2(p+1) }
		-
		\Gamma(n) \,
		{\Gamma(2p+n)\over \Gamma^2(p+n) }
		+
		1
		>
		{1\over n}.
	\end{align*}
	However, note that the above strict inequality always holds because ${\Gamma(2p+1)/\Gamma^2(p+1) }> 1$ and $ {\Gamma(2p+n)/\Gamma^2(p+n) }> 1$ for all $p>-1/2$ and nonzero.
	This concludes the proof.
\end{proof}

\begin{remark}
	Note that the estimators of the rate parameter power (with $p=1$) and the survival function in Table \ref{table:1} are Uniformly Minimum Variance Unbiased Estimators (UMVUE estimators) \citep[see Example 35, p. 328, in][]{Mood1913}.
\end{remark}

\section{Asymptotic behavior}

Applying the strong law of large numbers, we have
\begin{align*}
	\overline{X}\stackrel{\rm a.s.}{\longrightarrow} {1\over\lambda},
\end{align*}
where $``\stackrel{\rm a.s.}{\longrightarrow}"$ denotes almost sure convergence. Hence, continuous-mapping
theorem gives
\begin{align}\label{strong-convergence}
	\phi(\overline{X})\stackrel{\rm a.s.}{\longrightarrow}  \phi\left({1\over\lambda}\right),
\end{align}
whenever $\phi:(0,\infty)\to\mathbb{R}$ is defined by \eqref{final-exp}, that is,
\begin{align*} 
	\phi(x)
	=
	\int_0^{\infty} 
	\mathds{1}_{\{x\geqslant v\}}
	\left(1-{v\over x}\right)^{n-1} 
	\mathscr{L}^{-1}\left\{\xi\left({s\over n}\right)
	\right\}(v){\rm d}v.
\end{align*}

Furthermore,  by the 
central limit theorem (CLT), we have
\begin{align*}
	\sqrt{n}\left(\overline{X}-{1\over\lambda}\right)
	\stackrel{\mathscr{D}}{\longrightarrow} \text{N}\left(0,{1\over\lambda^2}\right),
\end{align*}
where $``\stackrel{\mathscr{D}}{\longrightarrow}"$ means convergence in distribution. 
If we additionally impose the condition that the sequence $\{\phi(\overline{X})\}_{n\geqslant 1}$ is uniformly integrable, then, by standard results  \citep[cf.][Theorem 5.4]{Billingsley1968}, from \eqref{strong-convergence} we would have that
\begin{align}\label{id-xi-phi}
	\xi(\lambda)
	=
	\lim_{n\to\infty}\mathbb{E}[\phi(\overline{X})]
	=
	\phi\left({1\over\lambda}\right),
\end{align}
where the first equality is valid by the fact that the estimator $\phi(\overline{X})$ is unbiased.
Thus, the delta method yields
\begin{align}\label{CLT}
	\sqrt{n}\left[\phi(\overline{X})-\xi(\lambda)\right]
	\stackrel{\eqref{id-xi-phi}}{=}
	\sqrt{n}\left[\phi(\overline{X})-\phi\left({1\over\lambda}\right)\right]
	\stackrel{\mathscr{D}}{\longrightarrow} N\left(0,\left[{1\over\lambda} \,  \phi'\left({1\over\lambda}\right)\right]^2\right),
\end{align}
whenever the derivative of $\phi(x)$ evaluated at the point $x=1/\lambda$, which is given by
\begin{align}\label{derivative-phi}
	\phi'\left({1\over\lambda}\right)
	=
	(n-1) \lambda^2
	\int_0^{\infty} 
	\mathds{1}_{\{1/\lambda \geqslant  v\}} \,
	v
	(1-\lambda{v})^{n-2} 
	\mathscr{L}^{-1}\left\{\xi\left({s\over n}\right)
	\right\}(v){\rm d}v,
\end{align}
exists and is non-zero valued.
Hence, by \eqref{strong-convergence}, we obtain the strong consistency of estimator $\phi(\overline{X})$, and from the  convergence \eqref{CLT}, a corresponding
CLT. The results found are summarized in the following theorem.
\begin{theorem}\label{theo-clt}
	If $X_1,\ldots,X_n$ is a random sample of size $n$ from $X\sim\exp(\lambda)$, then the unbiased estimator $\phi(\overline{X})$ of $\xi(\lambda)$, given in Theorem \ref{main theorem}, satisfies the following convergences:
	\begin{align*}
		\phi(\overline{X})\stackrel{\rm a.s.}{\longrightarrow}  \xi(\lambda)
	\end{align*}
	and
	\begin{align*}
		\sqrt{n}\left[\phi(\overline{X})-\xi(\lambda)\right]
		\stackrel{\mathscr{D}}{\longrightarrow} N\left(0,\left[{1\over\lambda} \,  \phi'\left({1\over\lambda}\right)\right]^2\right),
	\end{align*}
	provided the sequence $\{\phi(\overline{X})\}_{n\geqslant 1}$ is uniformly integrable and $\phi'\left({1/\lambda}\right)$ in \eqref{derivative-phi} exists and is non-zero valued.
\end{theorem}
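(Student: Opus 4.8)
The plan is to assemble the two stated convergences from the standard limit theorems already invoked in the discussion preceding the statement, treating the formula \eqref{final-exp} as defining the map $\phi:(0,\infty)\to\mathbb{R}$. The proof is then essentially a matter of collecting \eqref{strong-convergence}, \eqref{id-xi-phi} and \eqref{CLT} into a single statement, with the two hypotheses of the theorem supplying the missing ingredients.

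First I would establish the almost-sure convergence. The strong law of large numbers gives $\overline{X}\stackrel{\rm a.s.}{\longrightarrow} 1/\lambda$ since $\mathbb{E}(X)=1/\lambda$, and because the integral in \eqref{final-exp} defines a continuous function of $x$ (for $n\geqslant 2$ the factor $(1-v/x)^{n-1}$ is continuous up to the endpoint), the continuous-mapping theorem yields $\phi(\overline{X})\stackrel{\rm a.s.}{\longrightarrow}\phi(1/\lambda)$, which is \eqref{strong-convergence}. It remains to identify this limit with $\xi(\lambda)$. Here I would use that $\phi(\overline{X})$ is unbiased for every $n$ by Theorem \ref{main theorem}, so $\mathbb{E}[\phi(\overline{X})]=\xi(\lambda)$ and in particular $\lim_{n\to\infty}\mathbb{E}[\phi(\overline{X})]=\xi(\lambda)$; on the other hand, the assumed uniform integrability of $\{\phi(\overline{X})\}_{n\geqslant 1}$ together with the a.s. convergence and \citep[Theorem 5.4]{Billingsley1968} gives $\lim_{n\to\infty}\mathbb{E}[\phi(\overline{X})]=\phi(1/\lambda)$. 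Comparing the two limits produces \eqref{id-xi-phi}, namely $\phi(1/\lambda)=\xi(\lambda)$, whence $\phi(\overline{X})\stackrel{\rm a.s.}{\longrightarrow}\xi(\lambda)$.

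Next I would prove the asymptotic normality. The central limit theorem gives $\sqrt{n}(\overline{X}-1/\lambda)\stackrel{\mathscr{D}}{\longrightarrow}\mathrm{N}(0,1/\lambda^2)$. Under the hypothesis that $\phi$ is differentiable at $1/\lambda$ with $\phi'(1/\lambda)\neq 0$ as in \eqref{derivative-phi}, the delta method transfers this to $\sqrt{n}[\phi(\overline{X})-\phi(1/\lambda)]\stackrel{\mathscr{D}}{\longrightarrow}\mathrm{N}(0,[\phi'(1/\lambda)/\lambda]^2)$; substituting $\phi(1/\lambda)=\xi(\lambda)$ from the previous step yields \eqref{CLT}, the claimed limit law. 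The formula \eqref{derivative-phi} itself I would obtain by differentiating under the integral sign in \eqref{final-exp} and evaluating at $x=1/\lambda$: for $n\geqslant 2$ the factor $(1-v/x)^{n-1}$ vanishes at the upper endpoint $v=x$, so the indicator contributes no boundary term and Leibniz's rule applies, leaving exactly the stated integral.

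The main obstacle is conceptual rather than computational: the map $\phi=\phi_n$ in \eqref{final-exp} depends on $n$, through both the exponent $n-1$ and the argument $s/n$, so strictly speaking neither the continuous-mapping theorem nor the delta method applies verbatim, since each presumes a \emph{fixed} transformation. What rescues the argument is precisely the two hypotheses: the unbiasedness-plus-uniform-integrability route in \eqref{id-xi-phi} pins the almost-sure limit to $\xi(\lambda)$ for each $n$ irrespective of the $n$-dependence, and the assumed existence and non-vanishing of $\phi'(1/\lambda)$ supplies the single derivative the delta method requires. Making the $n$-dependence fully rigorous would demand uniform control of $\phi_n$ and $\phi_n'$ on a neighborhood of $1/\lambda$; since that control is absorbed into the stated hypotheses, the remaining work is only the assembly described above.
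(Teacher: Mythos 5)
Your proposal follows essentially the same route as the paper: the strong law of large numbers plus the continuous-mapping theorem for the almost-sure part, uniform integrability plus unbiasedness (via Billingsley's Theorem 5.4) to identify the limit $\phi(1/\lambda)=\xi(\lambda)$, and the delta method for the asymptotic normality, with \eqref{derivative-phi} obtained by differentiating under the integral sign. Your closing observation that $\phi=\phi_n$ depends on $n$, so that the continuous-mapping theorem and the delta method do not apply verbatim, is a genuine subtlety that the paper's own argument silently passes over as well.
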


\begin{remark}
	The sequence $\{\phi(\overline{X})\}_{n\geqslant 1}$, where
	$$\phi(\overline{X})=\dfrac{\Gamma(p+1)\Gamma(n) n^p}{\Gamma(p+n)} \, \overline{X}^{p}$$
	is the unbiased estimator of the $p$th  moment (see Table \ref{table:1}), is uniformly integrable for all $p>-1/2$ and nonzero.
	
	Indeed, to prove that the sequence $\{\phi(\overline{X})\}_{n\geqslant 1}$ is uniformly integrable, it suffices to prove the following uniform bound: for some $\varepsilon>0$, $\sup_{n\geqslant 1}\mathbb{E}[\vert\phi(\overline{X})\vert^{1+\varepsilon}]<\infty$, which is immediate, since by Proposition \ref{prop-compare-var}, we have 
	\begin{align*}
		\sup_{n\geqslant 1}
		\mathbb{E}[\{\phi(\overline{X})\}^{2}]
		=
		\sup_{n\geqslant 1}
		{\rm Var}(\phi(\overline{X}))
		\leqslant
		{\Gamma^2(p+1)\over\lambda^{2p}}
		\sup_{n\geqslant 1}
		\left[
		{\Gamma(2p+1)\over n\Gamma^2(p+1) }
		+
		\left(1-{1\over n}\right)
		-
		1
		\right]
		<\infty,
	\end{align*}
	for all $p>-1/2$ and nonzero. Consequently, the conditions of Theorem \ref{theo-clt} are satisfied.
\end{remark}

\section{Discussion}

In this paper, we explicitly derive unbiased estimators for various functions of the rate parameter of the exponential distribution using the Laplace transform. 
%Since the general formula originally proposed by \cite{Tate1959} for constructing unbiased estimators of functions of the rate parameter of the exponential distribution (without a location parameter) contains an error, the present work aimed to correct this formula and, additionally, presented unbiased estimators for various functions of this parameter.
%
{\color{black}
The results presented herein complements unbiased estimators proposed by \cite{Tate1959} of functions of parameters of the location-rate exponential distribution. }

%demonstrate that the estimators proposed by \cite{Tate1959} for the rate parameter power,  the quantile, and the power of the cumulative distribution function (maximum) are, in fact, biased. In this context, 

We provided a 
%corrected 
list of these estimators as well as several other unbiased estimators for population quantities of interest, including the $p$-th moment, survival function, minimum, probability density function \cite[previously presented by][]{Seheult1971}, mean past lifetime, moment generating function, and expected shortfall. It is worth noting that unbiased estimators for any other function of interest that is a linear combination of the quantities mentioned above can be readily obtained from the estimators provided, such as the variance, hazard function, cumulative hazard function, among others.

We also showed that, under certain existence and integrability conditions, the distribution of the proposed estimators is asymptotically normal. This result is particularly important, as it enables the construction of confidence intervals and the performance of hypothesis tests for large samples. Although the distributions of MLEs are also asymptotically normal, it is worth emphasizing that our estimators have the advantage of being unbiased by construction, whereas MLEs are not necessarily so. This advantage can be particularly relevant in scenarios that involve moderately large samples.

It is also important to mention that the proposed technique has limitations. In some cases, the inverse Laplace transform and the corresponding improper integral may not exist, which makes the analytical derivation of an unbiased estimator impossible using this methodology. Nevertheless, in such cases, the inverse (if it exists) can often be obtained numerically. In particular, we can compute the inverse Laplace transform in the R program using the \textit{invlaplace} function from the ``pracma" package \cite{Borchers2023}.

Future research may explore the derivation of unbiased estimators for other quantities of interest under different probability distributions. Although this topic has received little attention in recent decades, it may regain popularity due to current advances in computational methods. In situations where asymptotic normality cannot be guaranteed, either due to insufficient sample size and/or violation of the necessary conditions (existence and integrability for the proposed estimators, or regularity conditions in the case of MLEs), it is common to adopt computational techniques based on resampling. In such cases, the use of unbiased estimators, such as those proposed in this work, will help prevent biases in results, thus eliminating the need for subsequent bias corrections.

%\clearpage

\paragraph*{Acknowledgements}
The research was supported in part by CNPq and CAPES grants from the Brazilian government.

\paragraph*{Disclosure statement}
There are no conflicts of interest to disclose.

%%%%%%%%%%%%%%%%%%%%%%%%%%%%%%%%%%%%%%%%%%%%%%%%%%%%%%%%%%%%%

%\bibliographystyle{unsrt}
\bibliographystyle{apalike}
%	\bibliography{BibTexFileName}

%\typeout{}
%\bibliography{ref}

\begin{thebibliography}{10}

\bibitem[Billingsley, 1968]{Billingsley1968}
{Billingsley, P.} (1968),
{\it Convergence of Probability Measures}, John Wiley \& Sons, New
York.

\bibitem[Borchers, 2023]{Borchers2023}
{Borchers, H.} (2023).
{\it pracma: Practical Numerical Math Functions}. R package version 2.4.
\url{https://doi.org/10.32614/CRAN.package.pracma}.

\bibitem[Davidson, 1974]{Davidson1974}
{Davidson, R. R.} and D. L. {Daniel} (1974), 
Moment-type estimation in the exponential family,
{\it Communications in Statistics}, {3}, 1101-1108.
%\url{https://doi.org/10.1080/03610927408827212}.

\bibitem[Mood et al., 1974]{Mood1913}
{Mood, A. M.,  F. A. Graybill} and {D. C. Boes} (1974),
{\it Introduction to the theory of statistics}, 3rd ed., McGraw-Hill.

\bibitem[Seheult and Quesenberry, 1971]{Seheult1971}
{Seheult, A. H.} and {C. P. Quesenberry} (1971),
On unbiased estimation of density functions,
{\it The Annals of Mathematical Statistics}, {42}, 1434-1438. %\url{https://doi.org/10.1214/aoms/1177693255}

\bibitem[Tate, 1959]{Tate1959}
{Tate, R. F.} (1959),
Unbiased Estimation: Functions of Location and Scale Parameters,
{\it The Annals of Mathematical Statistics}, {30}, 341-366. %\url{https://doi.org/10.1214/aoms/1177706256}

\bibitem[Washio et al., 1956]{Washio1956}
{Washio, Y., H. Morimoto} and { N. Ikeda} (1956),
Unbiased estimation based on sufficient statistics,
{\it Bulletin of the Institute of Mathematical Statistics}, {6}, 69-93. 
%\url{https://doi.org/10.5109/12969}




%%
%% \bibitem[Abramowitz and Stegun, 1972]{Abramowitz1972}
%% Abramowitz, M., Stegun, I. A. (Eds.) 1972.
%% Handbook of Mathematical Functions with Formulas, Graphs, and Mathematical Tables, 9th printing. New York: Dover.
%%
%% \bibitem[Baydil et. al, 2025]{Baydil2025}
%% Baydil, B., de la Peña, V. H., Zou, H., Yao, H. 2025.
%% Unbiased estimation of the Gini
%% coefficient.
%% Statistics and Probability Letters,
%% Doi: \url{https://doi.org/10.1016/j.spl.2025.110376}.
%%
%% \bibitem[Botta et. al, 1987]{Botta1987}
%% Botta, R. F., Harris, C. M., Marchal, W. G. 1987.
%% Characterizations of generalized hyperexponential distribution functions.
%% Communications in Statistics Stochastic Models, 3, 115--148.
%%
%% \bibitem[D'Aurizio, 2016]{D'Aurizio2016}
%% D'Aurizio, J. 2016.
%% Integrating the lower incomplete gamma $\int_0^\infty x^{a-1}e^{-s x} \gamma(b,x) \mathrm{d}x$, {Mathematics Stack Exchange},
%% \url{https://math.stackexchange.com/q/1825107}.
%
% \bibitem[Deltas, 2003]{Delta2003}
% Deltas, G. 2003.
% The small-sample bias of the gini coefficient: results and implications for empirical research.
% Review of economics and statistics, 85,
% 226--234.
%
%% \bibitem[Fang and Chlamtac, 1999]{FangandChlamtac1999}
%% Fang, Y., Chlamtac, I. 1999.
%% Teletraffic analysis and mobility modeling of
%% PCS networks. IEEE Transactions on Communications, 47, 1062--1072.
%
% \bibitem[Gini, 1936]{Gini1936}
% Gini, C. 1936.
% On the Measure of Concentration with Special Reference to Income and Statistics, Colorado College Publication, General Series No. 208, 73--79.
%
%% \bibitem[Gómez et. al, 2014]{Gómez2014}
%% Gómez, Y. M., Bolfarine, H., Gómez,  H. W. 2014.
%% A new extension of the exponential distribution.
%% Revista Colombiana de Estatística, 37, 25--34.
%%
%% \bibitem[Kitani et. al, 2024]{Kitani2024}
%% Kitani, M., Murakami, H., Hashiguchi, H. 2024,
%% Distribution of the sum of gamma mixture random
%% variables.
%% SUT Journal of Mathematics, 60(1), 17--37.
%%
%% \bibitem[Lindley, 1958]{Lindley1958}
%% Lindley, D. 1958.
%% Fiducial distributions and Bayes' theorem.
%% Journal of the Royal Statistical Society Series B, 20, 102--107.
%%
%%
%% \bibitem[McDonald and Jensen, 1979]{McDonald1979}
%% McDonald, J. B., Jensen, B. C. 1979.
%% An analysis of some properties of alternative measures of income inequality based on the gamma distribution function.
%% Journal of the American Statistical Association, 74, 856--860.
% 
% \bibitem[Milewska et al., 2022]{Milewska2022}
% Milewska, M., van der Hofstad, R. W., Zwart, A. P. 2022. 
% Two more ways of spelling Gini Coefficient with Applications. Manuscript submitted for publication. Avaliable at \url{https://arxiv.org/pdf/2201.12298.pdf}.
%
%% \bibitem[Shanker, 2015]{Shanker2015}
%% Shanker, R. 2015.
%% Shanker distribution and its applications.
%% International Journal of Statistics and Applications, 5, 338--348.
%%
%% \bibitem[Shanker and Mishra, 2013]{ShankerandMishra2013}
%% Shanker, R., Mishra, A. 2013.
%% A quasi Lindley distribution.
%% African Journal of Mathematics and Computer Science Research, 6, 64--71.
%%
%% \bibitem[Shanker et al., 2013]{Shanker2013}
%% Shanker, R., Sharma, S., Shanker, U., Shanker, R. 2013.
%% Sushila distribution and its application to waiting times data.
%% Opin. Int. J. Bus. Manag., 3, 1--11.
%%
%% \bibitem[Vila et. al, 2024]{Vila2024}
%% Vila, R., Nakano, E., Saulo, H. 2024.
%% Novel closed-form point estimators for a weighted exponential
%% family derived from likelihood equations.
%% Stat, 13:e723.
% 
% \bibitem[Vila and Saulo, 2025]{Vila-Saulo2025} 
% Vila, R., Saulo, H. 2025.
% Bias in Gini coefficient estimation for gamma
% mixture populations.
%Manuscript submitted for publication. Avaliable at \url{https://arxiv.org/pdf/2503.00690}.
%
%%
%% \bibitem[Wolfram Research, 2024]{Wolfram Researchd2024}
%% Wolfram Research, Inc. 2024.
%% Mathematica, Version 14.2, Champaign, IL. \url{https://functions.wolfram.com/PDF/Hypergeometric2F1.pdf}.
%%
\end{thebibliography}

\end{document}